\newcommand{\g}{\mathfrak{g}}
\newcommand{\clfw}{\overline{\Lambda}} 
\newcommand{\inner}[2]{\left\langle #1, #2 \right\rangle}
\newcommand{\iso}{\cong}
\newcommand{\virtual}[1]{\widehat{#1}}
\newcommand{\induced}[1]{\widetilde{#1}} 
\DeclareMathOperator{\wt}{wt} 
\DeclareMathOperator{\cl}{cl} 
\newcommand{\ZZ}{\mathbb{Z}}
\newcommand{\RR}{\mathbb{R}}
\definecolor{darkred}{rgb}{0.7,0,0} 
\newcommand{\defn}[1]{{\color{darkred}\emph{#1}}} 
\lstdefinelanguage{Sage}[]{Python}
{morekeywords={False,sage,True},sensitive=true}
\definecolor{dblackcolor}{rgb}{0.0,0.0,0.0}
\definecolor{dbluecolor}{rgb}{0.01,0.02,0.7}
\definecolor{dgreencolor}{rgb}{0.2,0.4,0.0}
\definecolor{dgraycolor}{rgb}{0.30,0.3,0.30}
\protected\def\specialmergetwolists{%
  \begingroup
  \@ifstar{\def\cnta{1}\@specialmergetwolists}
    {\def\cnta{0}\@specialmergetwolists}%
}
\def\@specialmergetwolists#1#2#3#4{%
  \def\tempa##1##2{%
    \edef##2{%
      \ifnum\cnta=\@ne\else\expandafter\@firstoftwo\fi
      \unexpanded\expandafter{##1}%
    }%
  }%
  \tempa{#2}\tempb\tempa{#3}\tempa
  \def\cnta{0}\def#4{}%
  \foreach \x in \tempb{%
    \xdef\cnta{\the\numexpr\cnta+1}%
    \gdef\cntb{0}%
    \foreach \y in \tempa{%
      \xdef\cntb{\the\numexpr\cntb+1}%
      \ifnum\cntb=\cnta\relax
        \xdef#4{#4\ifx#4\empty\else,\fi\x#1\y}%
        \breakforeach
      \fi
    }%
  }%
  \endgroup
}
\DeclareDocumentCommand\rpp{ m m g }{
	\foreach \x [count=\s from 1] in {#1}{
	        {\ifnum\s=1
	                \draw (0,-\s)--(\x,-\s);
	                \fi}
	   \draw (0,-\s-1) to (\x,-\s-1);
	   \foreach \y in {0, ..., \x} {\draw (\y,-\s)--(\y,-\s-1);}
	}
	\specialmergetwolists{/}{#1}{#2}\ziplist
	\foreach \x/\y [count=\yi from 1] in \ziplist{
	    \node[anchor=west,font=\small] at (\x,-\yi - .5) {$\y$};
	}
	\IfValueT {#3}
	{\foreach \z [count=\zi from 1] in {#3} {\node[anchor=east,font=\small] at (0,-\zi - .5) {$\z$};}}
	{}
}
\theoremstyle{plain}
\newtheorem{thm}{Theorem}[section]
\newtheorem{lemma}[thm]{Lemma}
\newtheorem{conj}[thm]{Conjecture}
\newtheorem{prop}[thm]{Proposition}
\theoremstyle{definition}
\newtheorem{dfn}[thm]{Definition}
\newtheorem{ex}[thm]{Example}
\newtheorem{remark}[thm]{Remark}
\numberwithin{equation}{section}
\begin{document}
\title{Virtualization map for the Littelmann path model}

\author[J.~Pan]{Jianping Pan}
\address[J. Pan]{Department of Mathematics, University of California, One Shields
Avenue, Davis, CA 95616}
\email{jppan@math.ucdavis.edu}

\author[T.~Scrimshaw]{Travis Scrimshaw}
\address[T. Scrimshaw]{School of Mathematics, University of Minnesota, 200 Oak St. SE, Minneapolis, MN 55455}
\email{tcscrims@gmail.com}
\urladdr{https://sites.google.com/view/tscrim/home}

\keywords{crystal, Littelmann path model, virtualization, similarity}
\subjclass[2010]{05E10, 17B37}

\thanks{TS was partially supported by NSF grant OCI--1147247 and RTG grant NSF/DMS-1148634.}

\maketitle

\begin{abstract}
We show the natural embedding of weight lattices from a diagram folding is a virtualization map for the Littelmann path model, which recovers a result of Kashiwara.
As an application, we give a type independent proof that certain Kirillov--Reshetikhin crystals respect diagram foldings, which is a known result on a special case of a conjecture given by Okado, Schilling, and Shimozono.
\end{abstract}

\section{Introduction}
\label{sec:introduction}

In~\cite{K96}, Kashiwara gave a construction to realize a highest weight crystal~\cite{K90, K91} $B(\lambda)$ as a natural subset of $B(m\lambda)$ by dilating the crystal operators by $m$. Furthermore, Kashiwara gave necessary criteria for a generalization by realizing a $U_q(\g)$-crystal inside of a $U_q(\virtual{\g})$-crystal via considering a diagram folding of type $\virtual{\g}$ onto type $\g$. This realization and the corresponding isomorphism is known as a virtual crystal and virtualization map (the latter is also known as a similarity map) respectively.

Virtual crystals have been used effectively to reduce problems into simply-laced types~\cite{baker2000, OSS03III, OSS03II, SS2015, SS15, Scrimshaw15}, where it is typically easier to prove certain properties. Most notably, there is a set of axioms, known as the Stembridge axioms~\cite{Stembridge03}, which determines whether or not a crystal arises from a representation. In contrast, the only known (local) axioms are for type $B_2$~\cite{DKK09, Sternberg07}.

While there are numerous models for crystals, see, e.g.,~\cite{GL05, Kamnitzer10, KN94, LP08, L95-2, Nakajima03, SS2015}, many of them have not had their behavior under virtualization studied. Virtualization of the tableaux model was studied in~\cite{baker2000, OSS03III, OSS03II, SS15}, where the proofs were type-dependent and often involved tedious calculations. However, the situation is very different in other models. For rigged configurations, the virtualization map acts in a natural fashion~\cite{OSS03II, SS2015, SS2015II, SS15}. Additionally, the virtualization map for the polyhedral realization~\cite{Nakashima99,NZ97}, a semi-infinite tensor product of certain abstract crystals $\mathcal{B}_i$, is also well-behaved and is the setting in which Kashiwara proved his criteria~\cite{K96} for similarity.

The goal of this note is to describe the virtualization map for the Littelmann path model~\cite{L95-2}, which is given by paths in the real weight space. We show that the virtualization map is the induced map on the weight spaces. This map is natural as it reflects the fact that the Littelmann path model is based upon the geometry of the root system. As a result, we give precise conditions for the existence of this type of virtualization map, recovering Kashiwara's criteria. In addition, as the alcove model~\cite{LP08} is a discrete version of the Littelmann path model and Lakshmibai--Seshadri (LS) galleries~\cite{GL05} are based on the root system geometry, we expect a similar natural virtualization maps for these models.
Furthermore, to emphasize the relationship with the underlying geometry, it was shown in~\cite{JS15, NS08III} that the crystal structure on MV polytopes~\cite{Kamnitzer07,Kamnitzer10} admits natural virtualization maps.

We remark that this work partially overlaps with the work of Naito and Sagaki on LS paths invariant under a diagram automorphism~\cite{NS01,NS05II,NS10}. In their case, they only consider the fixed point subalgebras and allow automorphisms where there may be adjacent vertices in an orbit (e.g., the middle edge in the natural type $A_{2n}$ folding). Whereas in our case, we allow for arbitrary scaling factors and the vertices in the orbits of our automorphisms must not be adjacent. For example, this allows us to consider types $D_{n+1}^{(1)}$, $A_{2n}^{(2)}$, $A_{2n}^{(2)\dagger}$ as foldings of type $A_{2n-1}^{(2)}$.

The main application of our construction is about a particular class of finite crystals of affine type called Kirillov--Reshetikhin (KR) crystals~\cite{FOS09}. KR crystals and their associated KR modules are known to have many fascinating properties. KR modules are classified by their Drinfel'd polynomials and correspond to the minimial affinization of $B(r\clfw_r)$~\cite{CP95III, CP95II, CP95, CP96, CP96II, CP98}. Their characters (resp. $q$-characters~\cite{Nakajima03}) satisfy the Q-system (resp. T-system) relations~\cite{Chari01, Hernandez10}.
The graded characters of (resp. Demazure submodules of) tensor products of KR modules are (resp. Nonsymmetric) Macdonald polynomials at $t = 0$~\cite{LNSSS14, LNSSS14II} (resp.~\cite{LNSSS15}).
KR modules conjecturally admit crystal bases~\cite{HKOTY99, HKOTT02} and is known for most cases~\cite{JS10, KMOY07, LNSSS14, LNSSS14II, OS08, Yamane98}. KR crystals are (conjecturally) generally perfect~\cite{FOS10, KMOY07, Yamane98}, a technical property that allows them to be used to construct the Kyoto path model~\cite{KKMMNN92}, and known to be simple crystals in non-exceptional types~\cite{Okado13}.
KR crystals are also used to construct soliton cellular automata, where we refer the reader to~\cite{IKT12} and references therein. Tensor products of KR crystals are also (conjecturally) in bijection with rigged configurations, which arose from the study of the Bethe Ansatz of Heisenberg spin chains~\cite{KKR86, KR86, OSS13, OSSS16, OSS03, OSS03III, OSS03II, SS15, SS2006, Scrimshaw15}.

Tensor products of KR crystals of the form $\bigotimes_{i=1}^N B^{r_i,1}$ were constructed by Naito and Sagaki using projected level-zero LS paths in~\cite{NS03, NS06, NS08II, NS08} and using quantum LS paths and the quantum alcove model in~\cite{LNSSS14,LNSSS14II}. It was conjectured that KR crystals are also well-behaved under virtualization~\cite{OSS03II} and has been proven in a variety of special cases in a type-dependent fashion~\cite{Okado13, OSS03III, OSS03II, SS15}. We use the projected level-zero construction of Naito and Sagaki to give partial uniform results of the aforementioned conjecture.

This note is organized as follows. In Section~\ref{sec:background}, we give the necessary background on crystals, the Littelmann path model, level-zero crystals, and Kirillov--Reshetikhin crystals. In Section~\ref{sec:results}, we prove our main result, that the map on weight lattices naturally induces a virtualization map. In Section~\ref{sec:applications}, we apply our main result to show special cases of the conjectural virtualization of KR crystals.

\bigskip
\noindent {\bf Acknowledgements} We would like to thank Ben Salisbury for useful comments on an early draft of this paper. We thank the anonymous referees for comments. This work benefited from computations in {\sc SageMath}~\cite{combinat, sage}.

\section{Background}
\label{sec:background}

Let $\g$ be a (symmetrizable) Kac--Moody algebra with index set $I$, (generalized) Cartan matrix $A = (A_{ij})_{i,j \in I}$, fundamental weights $\{\Lambda_i \mid i \in I\}$, simple roots $\{\alpha_i \mid i \in I\}$, simple coroots $\{\alpha_i^{\vee} \mid i \in I\}$, weight lattice $P$, coweight lattice $P^{\vee}$, root lattice $Q$, and coroot lattice $Q^{\vee}$. Let $U_q(\g)$ be the corresponding quantum group, $\mathfrak{h}^*_{\RR} = \RR \otimes_{\ZZ} P$, and $\mathfrak{h}_{\RR} = \RR \otimes_{\ZZ} P^{\vee}$ be the corresponding dual space. Let $\inner{\cdot}{\cdot} \colon \mathfrak{h}^*_{\RR} \times \mathfrak{h}_{\RR} \to \RR$ denote the canonical pairing by evaluation, in particular $\inner{\alpha_i^{\vee}}{\alpha_j} = A_{ij}$ and $\inner{\alpha_i^{\vee}}{\Lambda_j} = \delta_{ij}$, where $\delta_{ij}$ is the Kronecker delta. Let $\{ s_i \mid i \in I \}$ denote the set of simple reflections on $P$, where $s_i(\lambda) = \lambda - \inner{\alpha_i^{\vee}}{\lambda} \alpha_i$. Let $P^+$ denote the positive weight lattice, i.e., all nonnegative linear combinations of the fundamental weights.

\subsection{Crystals}

An \defn{abstract $U_q(\g)$-crystal} is a non-empty set $B$ along with maps
\begin{align*}
e_i, f_i & \colon B \to B \sqcup \{0\},
\\ \wt & \colon B \to P,
\end{align*}
which satisfy the conditions:
\begin{enumerate} 
\item $\varphi_i(b) = \varepsilon_i(b) + \inner{\alpha_i^{\vee}}{\wt(b)}$,

\item $f_i b = b'$ if and only if $b = e_i b'$ for $b' \in B$,
\end{enumerate}
for all $b \in B$ and $i \in I$, where
\begin{align*}
\varepsilon_i(b) & = \max \{k \in \ZZ \mid e_i^k b \neq 0 \},
\\ \varphi_i(b) & = \max \{k \in \ZZ \mid f_i^k b \neq 0 \}.
\end{align*}
The maps $e_i$ and $f_i$ are known as the \defn{crystal operators}. We are restricting ourselves to \defn{regular crystals} in this note, and so our definition is less general than than the usual definition of crystals (see, e.g.,~\cite{K90, K91}).
Therefore the entire $i$-string through an element $b \in B$ can be given diagrammatically as
\[
e_i^{\varepsilon_i(b)} b \xrightarrow[\hspace{25pt}]{i} \cdots \xrightarrow[\hspace{25pt}]{i} e_i b \xrightarrow[\hspace{25pt}]{i} b \xrightarrow[\hspace{25pt}]{i} f_i b \xrightarrow[\hspace{25pt}]{i} \cdots \xrightarrow[\hspace{25pt}]{i} f_i^{\varphi_i(b)} b.
\]

Let $B_1$ and $B_2$ be abstract $U_q(\g)$-crystals. A \defn{crystal morphism} $\psi \colon B_1 \to B_2$ is a map $B_1 \sqcup \{0\} \to B_2 \sqcup \{0\}$ such that
\begin{enumerate}
\item $\psi(0) = 0$;

\item if $b \in B_1$ and $\psi(b) \in B_2$, then $\wt\bigl(\psi(b)\bigr) = \wt(b)$, $\varepsilon_i\bigl(\psi(b)\bigr) = \varepsilon_i(b)$, and $\varphi_i\bigl(\psi(b)\bigr) = \varphi_i(b)$ for all $i \in I$;

\item for $b, b' \in B_1$, $f_i b = b'$, and $\psi(b), \psi(b') \in B_2$, we have $\psi(f_i b) = f_i \psi(b)$ and $\psi(e_i b') = e_i \psi(b')$ for all $i \in I$.
\end{enumerate}

\subsection{Littelmann path model}

Let $\pi, \pi' \colon [0, 1] \to \mathfrak{h}^*_{\RR}$, and define an equivalence relation $\sim$ by saying $\pi \sim \pi'$ if there exists a piecewise-linear, nondecreasing, surjective, continuous function $\phi \colon [0, 1] \to [0, 1]$ such that $\pi = \pi' \circ \phi$. Let $\Pi$ be the set of all piecewise-linear functions $\pi \colon [0, 1] \to \mathfrak{h}^*_{\RR}$ such that $\pi(0) = 0$ modulo $\sim$. We call the elements of $\Pi$ \defn{paths}.

Let $\pi_1, \pi_2 \in \Pi$. Let $\pi = \pi_1 \ast \pi_2$ denote the concatenation of $\pi_1$ by $\pi_2$:
\[
\pi(t) := \begin{cases}
\pi_1(2t) & 0 \leq t \leq 1/2, \\
\pi_1(1) + \pi_2(2t - 1) & 1/2 < t \leq 1.
\end{cases}
\]

For $\pi \in \Pi$, define $s_i(\pi)$ as the path given by $\bigl(s_i(\pi)\bigr)(t) := s_i\bigl(\pi(t)\bigr)$. Define the path $\pi^{\vee}$ by $\pi^{\vee}(t) = \pi(1 - t) - \pi(1)$.

Define a function $H_{i,\pi} \colon [0, 1] \to \RR$ by
\[
t \mapsto \inner{\alpha_i^{\vee}}{\pi(t)},
\]
and so we can express any path $\pi \in \Pi$ as
\begin{equation}
\label{eq:expr_sum}
\pi(t) = \sum_{i \in I} H_{i,\pi}(t) \Lambda_i.
\end{equation}
Let $m_i(\pi) := \min \{H_{i,\pi}(t) \mid t \in [0, 1]\}$ denote the minimal value of $H_{i,\pi}$.

We want to define $e_i^{(k)}$, where $k \in \ZZ_{>0}$. If $m_i(\pi) \leq -k$, then fix $t_1 \in [0,1]$ minimal such that $H_{i,\pi}(t_1) = m_i(\pi)$ and let $t_0 \in [0,1]$ maximal such that $H_{i,\pi}(t) \geq m_i(\pi) + k$ for $t \in [0, t_0]$. Choose $t_0 = t_{(0)} < t_{(1)} < \cdots < t_{(r)} = t_1$ such that either
\begin{enumerate}
\item $H_{i,\pi}(t_{(j-1)}) = H_{i,\pi}(t_{(j)})$ and $H_{i,\pi}(t) \geq H_{i,\pi}(t_{(j-1)})$ for $t \in [t_{(j-1)}, t_{(j)}]$; 

\item or $H_{i,\pi}$ is strictly decreasing on $[t_{(j-1)}, t_{(j)}]$ and $H_{i,\pi}(t) \geq H_{i,\pi}(t_{(j)})$ for $t \leq t_{(j-1)}$.
\end{enumerate}
Set $t_{(-1)} := 0$ and $t_{(r+1)} := 1$, and denote by $\pi_{(j)}$ the path defined by
\[
\pi_{(j)}(t) := \pi\bigl(t_{(j-1)} + t \; (t_{(j)} - t_{(j-1)}) \bigr) - \pi(t_{(j-1)})
\]
for $i = 0, 1, \dotsc, r+1$.

\begin{dfn}
\label{def:e}
Fix some $k \in \ZZ_{>0}$. If $m_i(\pi) > -k$, then $e_i^{(k)} \pi = 0$. Otherwise,
\[
e_i^{(k)} \pi = \pi_{(0)} \ast \eta_{(1)} \ast \eta_{(2)} \ast \cdots \eta_{(r)} \ast \pi_{(r+1)},
\]
where $\eta_{(j)} = \pi_{(j)}$ if $H_{i,\pi}$ behaves as in~(1) and $\eta_{(j)} = s_i(\pi_{(j)})$ if $H_{i,\pi}$ behaves as in~(2) on $[t_{(j-1)}, t_{(j)}]$.
\end{dfn}

Next we want to define $f_i^{(k)}$, where $k \in \ZZ_{>0}$. Let $\overline{t}_0 \in [0, 1]$ be maximal such that $H_{i,\pi}(\overline{t}_0) = m_i(\pi)$. If $H_{i,\pi}(1) - m_i(\pi) \geq k$, then fix $\overline{t}_1 \in [\overline{t}_0, 1]$ minimal such that $H_{i,\pi}(t) \geq m_i(\pi) + k$ for $t \in [\overline{t}_1, 1]$. Choose $\overline{t}_0 = \overline{t}_{(0)} < \overline{t}_{(1)} < \cdots < \overline{t}_{(r)} = \overline{t}_1$ such that either
\begin{itemize}
\item[($\overline{1}$)] $H_{i,\pi}(\overline{t}_{(j-1)}) = H_{i,\pi}(\overline{t}_{(j)})$ and $H_{i,\pi}(t) \geq H_{i,\pi}(\overline{t}_{(j-1)})$ for $t \in [\overline{t}_{(j-1)}, \overline{t}_{(j)}]$; or

\item[($\overline{2}$)] $H_{i,\pi}$ is strictly increasing on $[\overline{t}_{(j-1)}, \overline{t}_{(j)}]$ and $H_{i,\pi}(t) \geq H_{i,\pi}(\overline{t}_{(j)})$ for $t \geq \overline{t}_{(j)}$.
\end{itemize}
Let $\overline{t}_{(-1)} := 0$ and $\overline{t}_{(r+1)} := 1$, and denote by $\overline{\pi}_{(j)}$ the path defined by
\[
\overline{\pi}_{(j)}(t) := \pi\bigl(\overline{t}_{(j-1)} + t \; (\overline{t}_{(j)} - \overline{t}_{(j-1)}) \bigr) - \pi(\overline{t}_{(j-1)})
\]
for $i = 0, 1, \dotsc, r+1$. It is clear that $\pi = \overline{\pi}_{(0)} \ast \overline{\pi}_{(1)} \ast \cdots \ast \overline{\pi}_{(r+1)}$.

\begin{dfn}
Fix some $k \in \ZZ_{>0}$. If $H_{i,\pi}(1) - m_i(\pi) < k$, then $f_i^{(k)} \pi = 0$. Otherwise,
\[
f_i^{(k)} \pi = \overline{\pi}_{(0)} \ast \overline{\eta}_{(1)} \ast \overline{\eta}_{(2)} \ast \cdots \overline{\eta}_{(r)} \ast \overline{\pi}_{(r+1)},
\]
where $\overline{\eta}_{(j)} = \overline{\pi}_{(j)}$ if $H_{i,\pi}$ behaves as in~($\overline{1}$) and $\overline{\eta}_{(j)} = s_i(\overline{\pi}_{(j)})$ if $H_{i,\pi}$ behaves as in~($\overline{2}$) on $[\overline{t}_{(j-1)}, \overline{t}_{(j)}]$.
\end{dfn}

\begin{ex}
Consider $\g$ of type $C_2$ and the highest weight vector $\pi \in B(3\Lambda_1+\Lambda_2)$. Thus we have
\[
\begin{tikzpicture}[scale=1.2]
\draw[step=1, gray, thin] (-1.1,-2.1) grid (5.1,3.1);
\draw[gray, thin] (-1,2) -- (0, 3);
\draw[gray, thin] (-1,1) -- (1, 3);
\draw[gray, thin] (-1,0) -- (2, 3);
\draw[gray, thin] (-1,-1) -- (3, 3);
\draw[gray, thin] (-1,-2) -- (4, 3);
\draw[gray, thin] (0,-2) -- (5, 3);
\draw[gray, thin] (1,-2) -- (5, 2);
\draw[gray, thin] (2,-2) -- (5, 1);
\draw[gray, thin] (3,-2) -- (5, 0);
\draw[gray, thin] (4,-2) -- (5, -1);
\draw (0, 0) node[anchor=south east]{0};
\draw[->, thick, red] (0, 0) -- (1,0) node[anchor=west] {$\Lambda_1$};
\draw[->, thick, red] (0, 0) -- (1,1) node[anchor=south west] {$\Lambda_2$};
\draw[->, thick, blue] (0, 0) -- (1,-1) node[anchor=north east] {$\alpha_1$};
\draw[->, thick, blue] (0, 0) -- (0,2) node[anchor=south east] {$\alpha_2$};
\draw[->, thick] (0,0) -- (4,1) node[anchor=north west] {$\pi$};
\draw[->, thick] (0,0) -- (1/3,4/3) -- (3, 2) node[anchor=north west] {$f_1 \pi$};
\draw[->, thick] (0,0) -- (4,-1) node[anchor=north west] {$f_2 \pi$};
\draw[dashed, green, thick] (4,1) -- (4,-1);
\draw[dashed, green, thick] (1/3,4/3) -- (4/3,1/3);
\end{tikzpicture}
\]
\end{ex}

\begin{remark}
If $k = 1$, we will simply write $e_i$ and $f_i$ for $e_i^{(1)}$ and $f_i^{(1)}$ respectively.
\end{remark}

\begin{lemma}[{\cite{K96}}]
\label{lemma:simple_scaling}
We have
\[
e_i^{(k)} = e_i^k \hspace{20pt} \text{ and } \hspace{20pt} f_i^{(k)} = f_i^k
\]
for all $i \in I$ and $k \in \ZZ_{>0}$.
\end{lemma}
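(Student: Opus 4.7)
The plan is to induct on $k$, with the base case $k=1$ holding by definition. For the inductive step, I would establish the factorization $e_i^{(k)}\pi = e_i^{(k-1)}(e_i\pi)$ and invoke the induction hypothesis; the companion identity $f_i^{(k)} = f_i^k$ is entirely analogous and can be handled in parallel. First I would check that the nonvanishing conditions on both sides agree: applying $e_i$ raises the minimum $m_i(\pi)$ of $H_{i,\pi}$ by exactly one (a standard consequence of Littelmann's construction), so the assumption $m_i(\pi) \leq -k$ is equivalent to requiring both $m_i(\pi) \leq -1$ and $m_i(e_i\pi) \leq -(k-1)$.

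Next I would compare the geometry of the two procedures. For $e_i^{(k)}$, the relevant interval is $[t_0, t_1]$, on which $H_{i,\pi}$ descends in net from $m_i(\pi)+k$ to $m_i(\pi)$. Inside this interval I would identify the subinterval $[u_0, t_1]$ on which $e_i$ itself acts, where $u_0$ is chosen maximally so that $H_{i,\pi}(t) \geq m_i(\pi)+1$ for $t \leq u_0$; note $u_0 \geq t_0$ since the defining condition on $u_0$ is weaker. Since the subdivision $t_0 = t_{(0)} < \cdots < t_{(r)} = t_1$ is not canonical and the resulting operator is well-defined only modulo $\sim$, I would refine it if necessary so that $u_0 = t_{(j_0)}$ for some $j_0$, cleanly separating the subintervals inside $[t_0, u_0]$ from those inside $[u_0, t_1]$. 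The strictly-decreasing (type (2)) segments in $[u_0, t_1]$ are then reflected by a single application of $e_i$ in exactly the manner prescribed by the $e_i^{(k)}$ procedure.

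For the remaining reflections in $[t_0, u_0]$, I would show that $e_i^{(k-1)}$ applied to $e_i\pi$ produces them. Because $e_i$ does not alter $\pi$ on $[0, u_0]$, the function $H_{i, e_i\pi}$ agrees with $H_{i,\pi}$ on that range; combined with $H_{i, e_i\pi}(u_0) = m_i(\pi)+1 = m_i(e_i\pi)$, this forces the $t_0$-endpoint of the $e_i^{(k-1)}$-procedure applied to $e_i\pi$ to coincide with the original $t_0$, since $H_{i, e_i\pi}(t_0) = m_i(\pi)+k = m_i(e_i\pi) + (k-1)$. The main obstacle will be the bookkeeping here: verifying that the monotonicity conditions (1) and (2) on each subinterval $[t_{(j-1)}, t_{(j)}] \subseteq [t_0, u_0]$ are unchanged by the preceding $e_i$ step, and that the reflected segments produced by $e_i^{(k-1)}$ on $e_i\pi$ concatenate with those produced by $e_i$ to reproduce $e_i^{(k)}\pi$ modulo the equivalence $\sim$. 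Once this matching is in place the induction closes immediately.
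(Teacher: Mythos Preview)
The paper does not actually prove this lemma: it is stated with the attribution \cite{K96} and no argument is given, so there is no proof in the paper to compare against. Your inductive strategy---reducing $e_i^{(k)}$ to $e_i^{(k-1)}\circ e_i$ by splitting the active interval $[t_0,t_1]$ at the point $u_0$ governing a single $e_i$---is the standard approach and is essentially what one finds in Littelmann's original papers on the path model.

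One point in your sketch deserves more care. You implicitly treat $u_0$ as the new $t_1$-endpoint for the $e_i^{(k-1)}$-procedure on $e_i\pi$, but this need not hold: $H_{i,\pi}$ may already touch the value $m_i(\pi)+1$ at some $t^*<u_0$ (this is compatible with the maximality defining $u_0$, since the defining inequality is non-strict), in which case the minimal point where $H_{i,e_i\pi}$ attains its minimum $m_i(e_i\pi)=m_i(\pi)+1$ occurs strictly before $u_0$. The reflections performed by $e_i^{(k-1)}$ on $e_i\pi$ then live on $[t_0,t^*]$, and you must separately verify that on $[t^*,u_0]$ the $e_i^{(k)}$-procedure performs no net reflection either (the type-(2) segments there, if any, are paired with type-(1) returns so that the subdivision can be chosen with only type-(1) pieces on $[t^*,u_0]$). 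This is true and is part of the ``bookkeeping'' you flag, but it is a genuine case to handle rather than an automatic consequence of the setup. Once that is addressed, your argument closes as you describe.
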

Note that
\begin{equation}
\label{eq:crystal_duality}
f_i^{(k)}(\pi) = \bigl( e_i^{(k)} (\pi^{\vee}) \bigr)^{\vee}.
\end{equation}

\begin{thm}[\cite{K96, L95, L95-2}]
\label{thm:highest_weight_paths}
Let $B(\pi)$ denote the closure of $\pi$ under $e_i$ and $f_i$, for all $i \in I$.
The set $B(\pi)$ is a $U_q(\g)$-crystal.
Moreover, if $\pi$ is a path in the dominant chamber (i.e., $H_{i,\pi}(t) \geq 0$ for all $i \in I$ and $t \in [0, 1]$) such that $\pi(1) = \lambda \in P^+$, then $B(\pi)$ is isomorphic to the highest weight crystal of weight $\lambda$.
\end{thm}

For some path $\eta$, we say $B(\eta)$ has the \defn{integrality property} if for each $\pi \in B(\eta)$ and $i \in I$, the values $H_{i,\pi}(1)$ and all local minimums of $H_{i,\pi}(t)$ are integers.
We will also need the following fact.

\begin{lemma}[{\cite[Corollary~1]{L95-2}}]
\label{lemma:integrality}
Suppose $\eta$ denote a path in the dominant chamber. Then $B(\eta)$ has the integrality property.
\end{lemma}

Let $B(\lambda)$ denote $B(\pi)$, where $\pi$ is the straight line path from $0$ to $\lambda$, explicitly given by $\pi(t) = t \lambda$. If $\lambda \in P^+$, then $B(\lambda)$ is the set of all LS paths.

\subsection{Virtualization maps}

Let $A$ and $\virtual{A}$ be Cartan matrices with indexing sets $I$ and $\virtual{I}$ respectively. We call a map $\phi \colon \virtual{I} \to I$ a \defn{generalized diagram folding} if for all $j \neq j'$, we have $\phi(j) = \phi(j')$ implies $\virtual{A}_{j,j'} = 0$ (i.e., the nodes are non-adjacent). Consider a generalized diagram folding $\phi$ and scaling factors $(\gamma_i \in \ZZ_{>0})_{i \in \virtual{I}}$. Let $\Psi \colon \mathfrak{h}^*_{\RR} \to \virtual{\mathfrak{h}}^*_{\RR}$ be the map of the corresponding weight spaces given by
\begin{equation}
\label{eq:virtualization_map}
\Lambda_i \mapsto \sum_{j \in \phi^{-1}(i)} \gamma_j \virtual{\Lambda}_j.
\end{equation}

\begin{dfn}
Let $\virtual{B}$ be a $U_q(\virtual{\g})$-crystal and $V \subseteq \virtual{B}$. Let $\phi$ and $(\gamma_a \in \ZZ_{>0})_{a \in I}$ be the generalized diagram folding and the scaling factors. The \defn{virtual crystal operators} (of type $\g$) are defined as 
\begin{subequations}
\label{eq:virtual_crystal_ops}
\begin{align}
e^v_i & = \prod_{j \in \phi^{-1}(i)} \virtual{e}_j^{\gamma_j},
\\ f^v_i & = \prod_{j \in \phi^{-1}(i)} \virtual{f}_j^{\gamma_j}.
\end{align}
\end{subequations}
A \defn{virtual crystal} is a pair $(V, \virtual{B})$ such that $V$ has a $U_q(\g)$-crystal structure defined by
\begin{equation}
\label{eq:virtual_crystal}
\begin{aligned}
e_i & := e_i^v &&& f_i & := f_i^v,
\\ \varepsilon_i & := \gamma_j^{-1} \virtual{\varepsilon}_j &&& \varphi_i & := \gamma_j^{-1} \virtual{\varphi}_j,
\\ && \wt := \Psi^{-1} \circ \virtual{\wt}.
\end{aligned}
\end{equation}
for any $j \in \phi^{-1}(i)$.
\end{dfn}

\begin{remark}
\label{remark:commuting_ambient_ops}
We note that because the nodes in $\phi^{-1}(i)$ are non-adjacent, Equation~\eqref{eq:virtual_crystal_ops} is well-defined since all of the crystal operators of type $\virtual{\g}$ used commute.
\end{remark}

Suppose $B$ is isomorphic to the virtual crystal $(V, \virtual{B})$ (as $U_q(\g)$-crystals), then we call the isomorphism a \defn{virtualization map}.

\subsection{Level-zero and Kirillov--Reshetikhin crystals}
\label{sec:level0_kr_crystals}

In this section, we describe two classes of crystals of affine type that will be used in Section~\ref{sec:applications}: level-zero crystals and Kirillov--Reshetikhin crystals. For this section, let $\g$ be of affine type.

For type $\g$, let $0 \in I$ denote the special node, and let $I_0 := I \setminus \{0\}$ be the index set of the corresponding classical type $\g_0$. Let $\delta = \sum_{i \in I} a_i \alpha_i$ and $c = \sum_{i \in I} a_i^{\vee} \alpha_i^{\vee}$ denote the null root and the canonical central element of $\g$ respectively, where $a_i$ and $a_i^{\vee}$ are the Kac and dual Kac labels, respectively, as given in~\cite[Table~Aff.~1]{kac90}.

We say a weight $\lambda \in P$ is a \defn{level-zero weight} if $\lambda(c) = 0$. A level-zero weight is \defn{level-zero dominant} if $\inner{\lambda}{\alpha_i^{\vee}} \geq 0$ for all $i \in I_0$. The \defn{level-zero fundamental weights} $\{\varpi_i \in P \mid i \in I_0\}$ are defined as
\begin{equation}
\label{eq:level_zero_fund_wt}
\varpi_i = \Lambda_i - a_i^{\vee} \Lambda_0.
\end{equation}

Let $U_q'(\g) := U_q([\g, \g])$ be the quantum group corresponding to the derived subalgebra of $\g$. Define the \defn{classical projection} as $\cl \colon \mathfrak{h}^*_{\RR} \to \mathfrak{h}^*_{\RR} /  \RR\delta$ as the canonical projection. We identity the weight lattice of $U_q'(\g)$ with $P_{\cl} := \{ \cl(\lambda) \mid \lambda \in P\}$.

We now recall an important class of finite-dimensional $U_q'(\g)$-modules called \defn{Kirillov--Reshetikhin (KR) modules} denoted by $W^{r,s}$, where $r \in I_0$ and $s \in \ZZ_{>0}$, which are classified by their Drinfeld polynomials~\cite{CP95, CP98}. KR modules conjecturally admit a crystal bases~\cite{HKOTY99, HKOTT02}, which has been shown to exist for all non-exceptional types in~\cite{OS08} and in certain cases in exceptional types~\cite{JS10, KMOY07, LNSSS14, LNSSS14II, Yamane98}. The corresponding crystal to $W^{r,s}$ is known as a \defn{Kirillov--Reshetikhin (KR) crystal} and is denoted by $B^{r,s}$.

KR crystals are also conjecturally well-behaved under the natural virtualization induced from the diagram folding $\phi$ given by the well-known natural embeddings of algebras~\cite{JM85}:
\begin{equation}
\label{eqn:affine_embeddings}
\begin{aligned}
C_n^{(1)}, A_{2n}^{(2)}, A_{2n}^{(2)\dagger}, D_{n+1}^{(2)} & \lhook\joinrel\longrightarrow A_{2n-1}^{(1)},
\\ B_n^{(1)}, A_{2n-1}^{(2)} & \lhook\joinrel\longrightarrow D_{n+1}^{(1)},
\\ E_6^{(2)}, F_4^{(1)} & \lhook\joinrel\longrightarrow E_6^{(1)},
\\ G_2^{(1)}, D_4^{(3)} & \lhook\joinrel\longrightarrow D_4^{(1)}.
\end{aligned}
\end{equation}

In this case, we define the scaling factors $(\gamma_j)_{j \in \virtual{I}}$ by $\gamma_j = \overline{\gamma}_i$ for all $j \in \phi^{-1}(i)$, where $\overline{\gamma}_i$ is given as follows.
\begin{enumerate}
\item Suppose the Dynkin diagram of $\g$ has a unique arrow.
\begin{enumerate}
  \item Suppose the arrow points towards the component of the special node $0$. Then $\overline{\gamma}_i = 1$ for all $i \in I$.
  \item Otherwise, $\overline{\gamma}_i$ is the order of $\phi$ for all $i$ in the component of $0$ after removing the arrow and $\overline{\gamma}_i = 1$ in all other components.
\end{enumerate}
\item Otherwise the Dynkin diagram of $\g$ has 2 arrows and is a folding of type $A_{2n-1}^{(1)}$. Then $\overline{\gamma}_i = 1$ for all $1 \leq i \leq n-1$, and for $i \in \{0, n\}$, we have $\overline{\gamma}_i = 2$ if the arrow points away from $i$ and $\overline{\gamma}_i = 1$ otherwise.
\end{enumerate}

\begin{conj}{{\cite[Conj.~3.7]{OSS03II}}}
\label{conj:virtualization}
The KR crystal $B^{r,s}$ of type $\g$ virtualizes into
\[
\virtual{B}^{r,s} = \begin{cases}
B^{n,s} \otimes B^{n,s} & \text{if $\g = A_{2n}^{(2)}, A_{2n}^{(2)\dagger}$ and $r = n$,}
\\ \bigotimes_{r' \in \phi^{-1}(r)} B^{r', \overline{\gamma}_r s} & \text{otherwise.}
\end{cases}
\]
\end{conj}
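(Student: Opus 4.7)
The plan is to use the main virtualization theorem established in Section~\ref{sec:results} together with the Naito--Sagaki realization of tensor products of KR crystals $\bigotimes_{i=1}^N B^{r_i,1}$ via projected level-zero LS paths. Because that realization is only available for $s=1$ (and products thereof), this approach can prove Conjecture~\ref{conj:virtualization} only for $s=1$, which matches the phrasing \emph{partial uniform results} in the introduction.

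I would first recall the Naito--Sagaki isomorphism: for a level-zero dominant weight $\lambda = \sum_{k} \varpi_{r_k}$, the set $\cl(\mathbb{B}(\lambda))$ of projected level-zero LS paths carries a $U_q'(\g)$-crystal structure isomorphic to $\bigotimes_{k} B^{r_k,1}$. In particular, the underlying crystal lives inside the Littelmann path model for $\lambda$. Next, I would compute $\Psi(\varpi_r)$ using the definition $\varpi_r = \Lambda_r - a_r^{\vee}\Lambda_0$ together with the scaling factors $(\gamma_i)_{i\in I}$ specified between Conjecture~\ref{conj:virtualization} and the embeddings~\eqref{eqn:affine_embeddings}. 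The definition of the $\gamma_i$ is tailored so that $\Psi(\varpi_r)$ equals $\sum_{b \in \phi^{-1}(r)} \gamma_r \virtual{\varpi}_b$ (up to an overall multiple of $\virtual{\delta}$ and a possible doubling at the short node in the $A_{2n}^{(2)}$ cases), which on the right-hand side is exactly the level-zero dominant weight whose projected LS-path crystal equals $\virtual{B}^{r,1}$ in the conjectural form.

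With these two identifications in hand, the main theorem of Section~\ref{sec:results} applies directly at the level of the full (non-projected) Littelmann path model: the weight-space map $\Psi$ induces a virtualization $B(\varpi_r) \hookrightarrow \virtual{B}\bigl(\Psi(\varpi_r)\bigr)$. To descend to KR crystals, I would check that $\Psi$ sends $\RR\delta$ into $\RR\virtual{\delta}$, so that $\Psi$ passes to a well-defined map between the classical projections $P_{\cl}$ and $\virtual{P}_{\cl}$; the intertwining relations~\eqref{eq:virtual_crystal_ops} and~\eqref{eq:virtual_crystal} then transfer from the path model to $\cl(\mathbb{B}(\varpi_r))$, giving the desired virtualization of $B^{r,1}$. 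The exceptional $A_{2n}^{(2)}$ case of the conjecture emerges because, at the short node $n$, the map $\Psi$ doubles the coefficient of $\varpi_n$, and the corresponding projected LS-path crystal at $2\virtual{\varpi}_n$ is $B^{n,1} \otimes B^{n,1}$ by the Naito--Sagaki tensor-product description.

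The main obstacle will be verifying compatibility at the special node $0$ under the classical projection: one must show both that $\Psi(\delta) \in \RR\virtual{\delta}$ with the scaling factors as prescribed, and that applying $e_0$ and $f_0$ in the projected model corresponds, via $\Psi$, to simultaneously applying the folded product of $\virtual{e}_j$'s and $\virtual{f}_j$'s for $j \in \phi^{-1}(0)$. A secondary technical point is checking that the projected LS paths in the image of $\Psi$ coincide with those comprising $\virtual{B}^{r,1}$ rather than a proper subset, which amounts to showing that the subcrystal generated by the highest-weight path exhausts the target; here one can use the connectedness of $B^{r,1}$ and count weights via $\Psi$, or appeal to Lemma~\ref{lemma:integrality} to control the local minima of $H_{i,\pi}$ along paths produced by the folded operators.
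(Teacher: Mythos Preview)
The statement you are asked to prove is a \emph{conjecture} quoted from~\cite{OSS03II}; the paper does not prove it in full. What the paper does prove is the special case $s=1$ under the additional hypothesis that $\gamma_r = 1$ or $\g$ is of type $A_{2n}^{(2)}, A_{2n}^{(2)\dagger}$, and its argument is exactly the one you outline: apply Theorem~\ref{thm:virtualization} to $B(\varpi_r)$, check that $\Psi(\delta)$ is a multiple of $\virtual{\delta}$ so the map descends to the classical projection (Proposition~\ref{prop:projected_virtualization}), and then invoke Theorem~\ref{thm:projected_LS_paths}. So at the level of strategy you match the paper.

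There is, however, a genuine gap in your plan that explains the extra hypothesis in the paper's theorem and that you have overlooked. You assert that the level-zero dominant weight $\Psi(\varpi_r) = \sum_{b \in \phi^{-1}(r)} \gamma_r\,\virtual{\varpi}_b$ has projected LS-path crystal equal to the conjectural $\virtual{B}^{r,1} = \bigotimes_{b \in \phi^{-1}(r)} B^{b,\gamma_r}$. But Theorem~\ref{thm:projected_LS_paths} says that for $\lambda = \sum_i m_i \varpi_i$ one obtains $\bigotimes_i (B^{i,1})^{\otimes m_i}$, so the projected LS-path crystal at $\sum_b \gamma_r\,\virtual{\varpi}_b$ is $\bigotimes_{b} (B^{b,1})^{\otimes \gamma_r}$, not $\bigotimes_{b} B^{b,\gamma_r}$. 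These coincide only when $\gamma_r = 1$; for instance, for $\g = C_n^{(1)}$ and $r = n$ one has $\gamma_n = 2$, and $B^{n,1}\otimes B^{n,1}$ is not isomorphic to $B^{n,2}$ in type $A_{2n-1}^{(1)}$. The $A_{2n}^{(2)}$ clause of the conjecture is precisely the one place where the target is already written as a tensor square $B^{n,s}\otimes B^{n,s}$, so Naito--Sagaki does match there. Thus your argument, as written, only establishes the case $s=1$ with $\gamma_r = 1$ (plus the $A_{2n}^{(2)}$ exception), which is exactly what the paper claims and no more.

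A minor remark: your ``secondary technical point'' about whether the image exhausts the target is not needed. A virtualization map is an embedding $B \hookrightarrow \virtual{B}$ onto a virtual sub-crystal; surjectivity is neither expected nor required.
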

Conjecture~\ref{conj:virtualization} was shown for $B^{r,1}$ in types $D_{n+1}^{(2)}$, $A_{2n}^{(2)}$, and $C_n^{(1)}$ in~\cite{OSS03III} and types $E_6^{(2)}$, $F_4^{(1)}$ (except $r=2$), $G_2^{(1)}$, and $D_4^{(3)}$ in~\cite{SS15}, $B^{1,s}$ for all non-exceptional types~\cite{OSS03II}, and $B^{r,s}$ for $r < n$ in types $B_n^{(1)}$ and $A_{2n-1}^{(2)}$~\cite{SS15}. The general case for non-exceptional affine types was done in~\cite{Okado13}.

We can extend $\cl$ to paths in a natural way by $\cl(\pi)(t) = \cl\bigl(\pi(t)\bigr)$. Now we can define the set of \defn{projected level-zero paths} $B(\lambda)_{\cl} := \{ \cl(\pi) \mid \pi \in B(\lambda) \}$. In~\cite{NS05}, it was shown that $B(\lambda)_{\cl}$ has a $U_q'(\g)$-crystal structure inherited from the $U_q(\g)$-crystal structure on $B(\lambda)$. We have the following key result that follows from~\cite{NS03, NS06, NS08II, NS08}.

\begin{thm}
\label{thm:projected_LS_paths}
Let $\g$ be of affine type. Let $\lambda = \sum_{i \in I_0} m_i \varpi_i$, with $m_i \in \ZZ_{\geq 0}$, is a dominant level-zero weight. Then there exists a canonical $U_q'(\g)$-crystal isomorphism
\[
\Upsilon \colon B(\lambda)_{\cl} \to \bigotimes_{i \in I_0} \left( B^{i,1} \right)^{\otimes m_i}.
\]
\end{thm}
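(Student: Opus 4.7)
The plan is to assemble the constructions of Naito and Sagaki into a single statement. The starting point is that for any $\lambda \in P$, the classical projection extends to paths by $\cl(\pi)(t) = \cl\bigl(\pi(t)\bigr)$, and the resulting set $B(\lambda)_{\cl}$ inherits a well-defined $U_q'(\g)$-crystal structure from $B(\lambda)$ (the content of~\cite{NS05}). The crystal operators $e_i, f_i$ for $i \in I_0$ descend to the quotient directly, since they are controlled by the functions $H_{i,\pi}$, which are insensitive to the $\delta$-component; and $e_0, f_0$ descend because $\alpha_0$ differs from a level-zero classical weight only by a multiple of $\delta$, so all the data used to define the operators is well-defined modulo $\RR\delta$.

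First I would treat the single fundamental weight case $\lambda = \varpi_i$, establishing a canonical isomorphism $B(\varpi_i)_{\cl} \cong B^{i,1}$ sending the class of the straight line path $t \mapsto t\varpi_i$ to the classical highest weight vector of $B^{i,1}$. Here one identifies the elements of $B(\varpi_i)$ with level-zero Lakshmibai--Seshadri paths of shape $\varpi_i$, and, after classical projection, compares the resulting crystal structure directly with the known realization of the KR crystal $B^{i,1}$. This identification is type-dependent and is the main technical content of~\cite{NS08}, with the analogous classical and twisted cases handled in~\cite{NS03, NS06, NS08II}.

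The general case $\lambda = \sum_{i \in I_0} m_i \varpi_i$ would then follow by path concatenation. The straight line path $\pi(t) = t\lambda$ is crystal-equivalent, via concatenation, to a join of straight line paths to the summands $\varpi_i$ taken with the correct multiplicities. Since concatenation realizes the tensor product in the path model and commutes with $\cl$, one obtains canonically
\[
B(\lambda)_{\cl} \;\cong\; \bigotimes_{i \in I_0} B(\varpi_i)_{\cl}^{\otimes m_i} \;\cong\; \bigotimes_{i \in I_0} \bigl(B^{i,1}\bigr)^{\otimes m_i}.
\]
The hardest part is this fundamental weight step: explicitly matching the projected LS path crystal with the known model of $B^{i,1}$ in each affine type. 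This amounts to verifying that the projection collapses exactly the expected pairs of LS paths and that the projected operators agree with the KR crystal operators, and it must be done on a case-by-case basis---for non-exceptional types against, for example, the tableau model of~\cite{OS08}, and for exceptional types against the constructions cited after Conjecture~\ref{conj:virtualization}. Once this step is in place, the tensor product reduction above is essentially formal.
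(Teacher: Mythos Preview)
The paper does not give its own proof of this theorem; it simply records it as a consequence of the cited Naito--Sagaki papers. Your outline is a reasonable summary of how those references assemble into the statement, and the two-step structure (fundamental case $\lambda = \varpi_i$, then concatenation for general $\lambda$) is exactly right.

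That said, two points in your sketch mischaracterize where the work lies. First, the identification $B(\varpi_i)_{\cl} \cong B^{i,1}$ in the Naito--Sagaki papers is not obtained by type-by-type comparison with tableau or other combinatorial models; it goes through Kashiwara's theory of extremal weight modules and their crystal bases, so that $B(\varpi_i)$ is identified uniformly with the crystal of the level-zero fundamental representation $W(\varpi_i)$, and the classical projection then corresponds to passing to the finite-dimensional quotient $W^{i,1}$. The argument is largely type-independent. Second, the concatenation step is not ``essentially formal'': for highest weight crystals Littelmann's concatenation-equals-tensor-product result applies directly, but for level-zero weights one must show that $B(\lambda)$ is still generated by the straight-line path and that the concatenation map is surjective onto the tensor product after projection. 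This is one of the genuine contributions of \cite{NS06, NS08II, NS08}, not a formality. So your allocation of difficulty is inverted: the tensor decomposition is subtle, while the fundamental case is more uniform than you suggest.
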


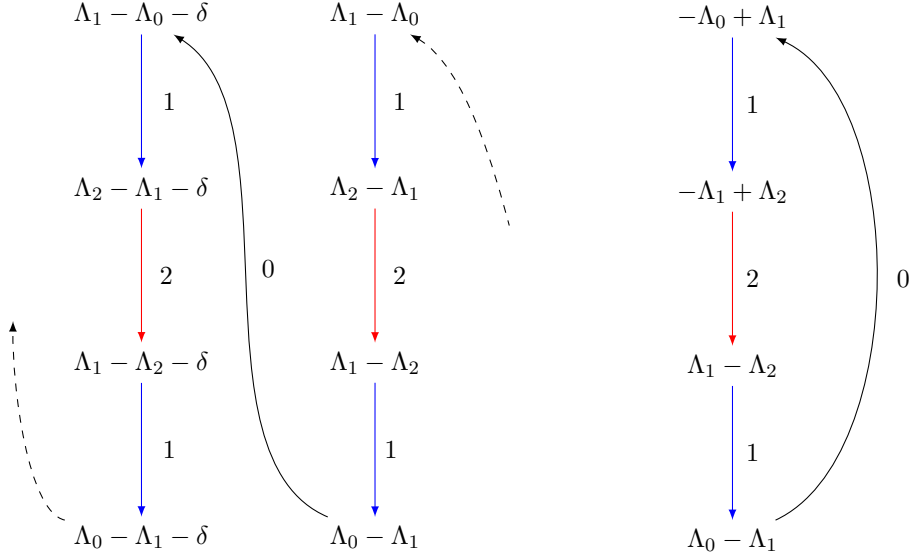
\begin{figure}

\[
\begin{tikzpicture}[>=latex,line join=bevel,xscale=1.1, yscale=0.9, every node/.style={scale=0.9},baseline=0]
  \node (node_8) at (24.0bp,9.5bp) [draw,draw=none] {$\Lambda_0-\Lambda_1-\delta$};
  \node (node_5) at (24.0bp,82.5bp) [draw,draw=none] {$\Lambda_1-\Lambda_2-\delta$};
  \node (node_4) at (24.0bp,155.5bp) [draw,draw=none] {$\Lambda_2-\Lambda_1-\delta$};
  \node (node_3) at (24.0bp,228.5bp) [draw,draw=none] {$\Lambda_1-\Lambda_0-\delta$};
  \node (node_6) at (104.0bp,82.5bp) [draw,draw=none] {$\Lambda_1-\Lambda_2$};
  \node (node_2) at (104.0bp,9.5bp) [draw,draw=none] {$\Lambda_0-\Lambda_1$};
  \node (node_1) at (104.0bp,228.5bp) [draw,draw=none] {$\Lambda_1-\Lambda_0$};
  \node (node_0) at (104.0bp,155.5bp) [draw,draw=none] {$\Lambda_2-\Lambda_1$};
  \definecolor{strokecol}{rgb}{0.0,0.0,0.0};
  \pgfsetstrokecolor{strokecol}
  \draw [blue,->] (node_6) ..controls (104.0bp,62.04bp) and (104.0bp,43.45bp)  .. (node_2);
  \draw (109.5bp,46.0bp) node {$1$};
  \draw [red,->] (node_4) ..controls (24.0bp,135.04bp) and (24.0bp,116.45bp)  .. (node_5);
  \draw (32.5bp,119.0bp) node {$2$};
  \draw [black,<-] (node_3) ..controls (78.218bp,186.7bp) and (39.443bp,42.9bp)  .. (node_2);
  \draw (67.5bp,122.0bp) node {$0$};
  \draw [blue,->] (node_5) ..controls (24.0bp,61.722bp) and (24.0bp,42.337bp)  .. (node_8);
  \draw (33.5bp,46.0bp) node {$1$};
  \draw [blue,->] (node_1) ..controls (104.0bp,204.04bp) and (104.0bp,185.45bp)  .. (node_0);
  \draw (112.5bp,192.0bp) node {$1$};
  \draw [blue,->] (node_3) ..controls (24.0bp,213.62bp) and (24.0bp,206.99bp)  .. (node_4);
  \draw (33.5bp,192.0bp) node {$1$};
  \draw [red,->] (node_0) ..controls (104.0bp,131.04bp) and (104.0bp,112.45bp)  .. (node_6);
  \draw (112.5bp,119.0bp) node {$2$};
  \draw [black,->,dashed] (node_8) ..controls (-15.0bp,20.0bp) and (-20.3bp,72.9bp)  .. (-20bp, 100bp);
  \draw [black,<-,dashed] (node_1) ..controls (135.2bp,206.7bp) and (145.4bp,160.9bp)  .. (150bp, 140bp);
%
\end{tikzpicture}
\hspace{60pt}
\begin{tikzpicture}[>=latex,line join=bevel,scale=0.9, every node/.style={scale=0.9}]
  \node (node_2) at (104.0bp,155.5bp) [draw,draw=none] {$\Lambda_0-\Lambda_1$};
  \node (node_6) at (104.0bp,228.5bp) [draw,draw=none] {$\Lambda_1-\Lambda_2$};
  \node (node_1) at (104.0bp,374.5bp) [draw,draw=none] {$-\Lambda_0+\Lambda_1$};
  \node (node_0) at (104.0bp,301.5bp) [draw,draw=none] {$-\Lambda_1+\Lambda_2$};
  \definecolor{strokecol}{rgb}{0.0,0.0,0.0};
  \pgfsetstrokecolor{strokecol}
  \draw [blue,->] (node_6) ..controls (104.0bp,208.04bp) and (104.0bp,189.45bp)  .. (node_2);
  \draw (112.5bp,192.0bp) node {$1$};
  \draw [red,->] (node_0) ..controls (104.0bp,281.04bp) and (104.0bp,262.45bp)  .. (node_6);
  \draw (112.5bp,265.0bp) node {$2$};
  \draw [blue,->] (node_1) ..controls (104.0bp,354.04bp) and (104.0bp,335.45bp)  .. (node_0);
  \draw (112.5bp,338.0bp) node {$1$};
  \draw [black,<-] (node_1) ..controls (178.218bp,340.0bp) and (178.218bp,190.0bp)  .. (node_2);
  \draw (175.5bp,265.0bp) node {$0$};
\end{tikzpicture}
\]
\caption{A portion of the level-zero crystal $B(\Lambda_1 - \Lambda_0)$ (left) and $B^{1,1}$ constructed from $\Upsilon$ (right) in type $C_2^{(1)}$.}
\label{fig:C2}
\end{figure}

\begin{ex}
\label{ex:type_C2}
The construction of $B^{1,1}$ from the classical projection of $B(\Lambda_1 - \Lambda_0)$ is given in Figure~\ref{fig:C2}.
\end{ex}

\section{Main results}
\label{sec:results}

In this section, we prove our main result.

\begin{thm}
\label{thm:virtualization}
Fix a map $\phi \colon \virtual{I} \to I$. The induced map $\Psi \colon P \to \virtual{P}$ given by Equation~\eqref{eq:virtualization_map} induces a virtualization map $\induced{\Psi} \colon B(\lambda) \to B\bigl(\Psi(\lambda) \bigr)$ given by
\[
\induced{\Psi}(\pi)(t) = \sum_{i \in I} H_{i,\pi}(t) \Psi(\Lambda_i)
\]
if and only if the following properties hold:
\begin{enumerate}[(I)]
\item For all $j \neq j' \in \phi^{-1}(i)$, we have $\virtual{A}_{j,j'} = 0$ (i.e., $\phi$ is a generalized diagram folding).

\item We have
\begin{equation}
\label{eq:virtual_simple_roots}
\Psi(\alpha_i) = \sum_{j \in \phi^{-1}(i)} \gamma_j \virtual{\alpha}_j.
\end{equation}
\end{enumerate}
\end{thm}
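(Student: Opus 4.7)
The plan is to analyze the interaction between $\induced{\Psi}$ and the Littelmann crystal operators by tracking the height functions $H_{i,\pi}$. The key computation is that, for any path $\pi \in \Pi$ and any $j \in \phi^{-1}(i)$,
\[
\virtual{H}_{j,\induced{\Psi}(\pi)}(t) = \inner{\virtual{\alpha}_j^{\vee}}{\induced{\Psi}(\pi)(t)} = \sum_{k \in I} H_{k,\pi}(t)\, \inner{\virtual{\alpha}_j^{\vee}}{\Psi(\Lambda_k)} = \gamma_j\, H_{i,\pi}(t),
\]
using $\inner{\virtual{\alpha}_j^{\vee}}{\virtual{\Lambda}_{j'}} = \delta_{jj'}$. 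Combined with Lemma~\ref{lemma:simple_scaling}, which identifies $\virtual{f}_j^{\gamma_j}$ with $\virtual{f}_j^{(\gamma_j)}$, this shows that the partition points $\overline{t}_{(\ell)}$ governing $\virtual{f}_j^{(\gamma_j)}\induced{\Psi}(\pi)$ coincide with those governing $f_i^{(1)}\pi$, and that all the numerical identities required by \eqref{eq:virtual_crystal} ($\virtual{\varepsilon}_j \circ \induced{\Psi} = \gamma_j\,\varepsilon_i$, $\virtual{\varphi}_j \circ \induced{\Psi} = \gamma_j\,\varphi_i$, and $\virtual{\wt}\circ\induced{\Psi} = \Psi\circ\wt$) hold automatically.

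For the forward direction, assume (I) and (II). The central claim is that applying $\virtual{f}_j^{\gamma_j}$ for $j \in \phi^{-1}(i)$ sequentially (in any order) to $\induced{\Psi}(\pi)$ reproduces $\induced{\Psi}(f_i\pi)$. Condition~(I) is used in two ways: after reflecting an increment by $\virtual{s}_j$, the pairing with $\virtual{\alpha}_{j'}^{\vee}$ for a different $j' \in \phi^{-1}(i)$ is unchanged, since $\inner{\virtual{\alpha}_{j'}^{\vee}}{\virtual{\alpha}_j} = \virtual{A}_{j',j} = 0$, so the partition for each subsequent operator is the same; and for the same reason the reflections $\virtual{s}_j$ pairwise commute. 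On each type-$(\overline{2})$ segment, letting $\nu$ denote the corresponding increment of $\pi$, the combined effect is
\[
\Psi(\nu) \;\longmapsto\; \Psi(\nu) - \sum_{j \in \phi^{-1}(i)} \gamma_j \inner{\alpha_i^{\vee}}{\nu}\, \virtual{\alpha}_j = \Psi(\nu) - \inner{\alpha_i^{\vee}}{\nu}\,\Psi(\alpha_i) = \Psi(s_i\nu),
\]
where property (II) supplies the middle equality; this matches the increment of $\induced{\Psi}(f_i\pi)$ on the same segment. Since $\induced{\Psi}$ sends the straight-line path $\pi_\lambda(t) = t\lambda$ to the straight-line path $t\mapsto t\,\Psi(\lambda)$, an induction on the distance from the highest-weight generator gives $\induced{\Psi}(B(\lambda))\subseteq B(\Psi(\lambda))$ and the intertwining property for $f_i^v$; the statement for $e_i^v$ then follows symmetrically from \eqref{eq:crystal_duality}.

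For the converse, suppose $\induced{\Psi}$ is a virtualization map. Condition (II) is forced by comparing endpoints: for any $\lambda$ with $\inner{\alpha_i^{\vee}}{\lambda} \geq 1$, the endpoint of $f_i^v \induced{\Psi}(\pi_\lambda)$ is $\Psi(\lambda) - \sum_{j \in \phi^{-1}(i)} \gamma_j\, \virtual{\alpha}_j$, while the endpoint of $\induced{\Psi}(f_i\pi_\lambda)$ is $\Psi(\lambda - \alpha_i)$, yielding $\Psi(\alpha_i) = \sum_{j \in \phi^{-1}(i)} \gamma_j\, \virtual{\alpha}_j$. For (I), suppose for contradiction that $\virtual{A}_{j,j'} \neq 0$ for some distinct $j,j'\in \phi^{-1}(i)$ and apply $\virtual{f}_j^{\gamma_j}$ followed by $\virtual{f}_{j'}^{\gamma_{j'}}$ to $\induced{\Psi}(\pi_\lambda)$, choosing $\lambda$ with $\inner{\alpha_i^{\vee}}{\lambda}$ sufficiently large. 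After the first reflection, $\virtual{H}_{j',\cdot}$ picks up a term proportional to $\virtual{A}_{j',j}$ and so is no longer proportional to $H_{i,\pi_\lambda}$; this shifts the partition point used by the second operator and introduces an extra turning point in $f_i^v\induced{\Psi}(\pi_\lambda)$ that is absent from the two-segment shape of $\induced{\Psi}(f_i\pi_\lambda)$, contradicting the intertwining property. The main obstacle in the argument is the careful bookkeeping of partition points and reflections through sequential applications of the virtual operators; once the height-function identity above is in hand, the verification that orthogonal reflections commute and combine, via (II), into $\Psi\circ s_i$ is the bulk of what remains.
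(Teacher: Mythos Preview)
Your proposal is correct and follows essentially the same route as the paper: both hinge on the height-function identity $\virtual{H}_{j,\induced{\Psi}(\pi)} = \gamma_j H_{i,\pi}$, which forces the subdivision points for $f_i$ on $\pi$ and for $\virtual{f}_j^{(\gamma_j)}$ on $\induced{\Psi}(\pi)$ to coincide, and both reduce the segment-by-segment check to a reflection computation where~(II) appears.

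The presentational differences are minor. The paper works with $e_i$ and packages the virtual operator as a single ``virtual reflection'' $s_i^v = \prod_{j \in \phi^{-1}(i)} \virtual{s}_j$ acting at once, whereas you apply the $\virtual{f}_j^{\gamma_j}$ sequentially and explicitly invoke~(I) to show each application leaves the remaining $\virtual{H}_{j',\cdot}$ (hence the partition) unchanged; this is arguably a cleaner justification of the same step. For the converse, the paper extracts~(II) as the if-and-only-if in the reflection identity $s_i^v \circ \Psi = \Psi \circ s_i$, while your endpoint comparison on the straight-line path is a more direct way to isolate it; and for~(I) the paper simply remarks that it is equivalent to the virtual operators being well-defined, while you give a concrete shape contradiction on $\induced{\Psi}(\pi_\lambda)$.
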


\begin{proof}
Fix some $\pi \in B(\lambda)$, and set
\[
\virtual{\pi}(t) := \induced{\Psi}(\pi)(t) = \sum_{i \in I}  H_{i,\pi}(t) \sum_{j \in \phi^{-1}(i)} \gamma_j \virtual{\Lambda}_j.
\]
It is clear that $\induced{\Psi}(\pi^{\vee}) = \bigl( \induced{\Psi}(\pi) \bigr)^{\vee}$. So $\induced{\Psi}$ is a virtualization map if and only if $\induced{\Psi}(e_i \pi) = e_i^v \induced{\Psi}(\pi)$ by Equation~\eqref{eq:crystal_duality}, where Equation~\eqref{eq:crystal_duality} also holds for the virtual crystal operators.
Note that $\varepsilon_i(\pi) = -\min_t H_{i,\pi}(t)$, and hence, we have $\virtual{\varepsilon}_j(\virtual{\pi}) / \gamma_j = \virtual{\varepsilon}_{j'}(\virtual{\pi}) / \gamma_{j'} = \varepsilon_i(\pi)$ for all $j, j' \in \phi^{-1}(i)$.
Thus note that property~(I) holds if and only if the virtual crystal operators are well-defined (see Remark~\ref{remark:commuting_ambient_ops}).

Fix some $i \in I$. We first consider the case when $e_i \pi = 0$. Thus we have $m_i(\pi) > -1$. By Lemma~\ref{lemma:integrality}, we can assume $m_i(\pi) \geq 0$. Next, take any $k \in \phi^{-1}(i)$. Then
\begin{align*}
m_k(\virtual{\pi}) & = \min \{ H_{k,\virtual{\pi}}(t) \mid t \in [0,1] \}
\\ & = \min \{ \inner{\virtual{\alpha}_k^{\vee}}{\virtual{\pi}(t)} \mid t \in [0,1] \}
\\ & = \min \{ \gamma_k H_{i,\pi}(t) \mid t \in [0,1] \}
\\ & = \gamma_k m_i(\pi) \geq 0.
\end{align*}
Therefore $\virtual{e}_k \virtual{\pi} = 0$ for any $k \in \phi^{-1}(i)$.

Now we consider the case when $e_i \pi \neq 0$. Thus, for any $j \in \phi^{-1}(i)$, we have
  \[
 H_{j,\virtual{\pi}}(t) = \inner{\virtual{\alpha}_j^{\vee}}{\induced{\Psi}(\pi)(t)} = \gamma_j H_{i,\pi}(t).
 \]
Fix some division of $\pi$ into subpaths
\[
e_i \pi = \pi_{(0)} \ast \eta_{(1)} \ast \eta_{(2)} \ast \cdots \ast \eta_{(r)} \ast \pi_{(r+1)}
\]
as in Definition~\ref{def:e}. Note that
\[
\induced{\Psi}(\pi) = \induced{\Psi}(\pi_{(0)}) \ast \induced{\Psi}(\pi_{(1)}) \ast \induced{\Psi}(\pi_{(2)}) \ast \cdots \induced{\Psi}(\pi_{(r)}) \ast \induced{\Psi}(\pi_{(r+1)})
\]
since conditions~(1) and~(2) for the subdivision of $\pi$ still hold when $H_{i,\pi}$ is scaled by a positive constant. Thus we have
\[
\induced{\Psi}(e_i \pi) = \induced{\Psi}(\pi_{(0)}) \ast \induced{\Psi}(\eta_{(1)}) \ast \induced{\Psi}(\eta_{(2)}) \ast \cdots \ast \induced{\Psi}(\eta_{(r)}) \ast \induced{\Psi}(\pi_{(r+1)}),
\]
and to show
\[
e_i^v \induced{\Psi}(\pi) = \virtual{\pi}_{(0)} \ast \virtual{\eta}_{(1)} \ast \virtual{\eta}_{(2)} \ast \cdots \ast \virtual{\eta}_{(r)} \ast \virtual{\pi}_{(r+1)} = \induced{\Psi}(e_i \pi),
\]
it is sufficient to show that $\virtual{\eta}_{(k)} = \induced{\Psi}(\eta_{(k)})$ for all $1 \leq k \leq r$ since $\virtual{\pi}_{(0)} = \induced{\Psi}(\pi_{(0)})$ and $\virtual{\pi}_{(r+1)} = \induced{\Psi}(\pi_{(r+1)})$.

Fix some $1 \leq k \leq r$. If $\eta_{(k)}=\pi_{(k)}$, then $H_{i,\pi}(t_{(k-1)}) = H_{i,\pi}(t_{(k)})$ and $H_{i,\pi}(t) \geq H_{i,\pi}(t_{(k-1)})$ for $t \in [t_{(k-1)}, t_{(k)}]$, i.e., condition (1) is satisfied. So for any $j \in \phi^{-1}(i)$ and $t \in [t_{(k-1)},t_{(k)}]$, we obtain
\[
H_{j,\virtual{\pi}}(t_{(k)}) = \gamma_j H_{i,\pi}(t_{(k)})=\gamma_j H_{i,\pi}(t_{(k-1)}) = H_{j,\virtual{\pi}}(t_{(k-1)}),
\]
and 
\[
H_{j,\virtual{\pi}}(t) = \gamma_j H_{i,\pi}(t) \geq  \gamma_j H_{i,\pi}(t_{(k-1)}) = H_{j,\virtual{\pi}}(t_{(k-1)})
\]
since $\gamma_i $ is positive. Therefore $\virtual{\eta}_{(k)} = \induced{\Psi}(\pi_{(k)}) = \induced{\Psi}(\eta_{(k)})$.

Now suppose $\eta_{(k)} = s_i \pi_{(k)}$, which implies $H_{i,\pi}$ is strictly decreasing on $[t_{(k-1)}, t_{(k)}]$ and $H_{i,\pi}(t) \geq H_{i,\pi}(t_{(k)})$ for $t \leq t_{(k-1)}$, i.e., condition~(2) is satisfied.
Then
\[
\eta_{(k)}(t) = (s_i \pi _{(k)})(t) = s_i\bigl( \pi_{(k)}(t) \bigr) = \pi_{(k)}(t) - \inner{\alpha_i^{\vee}}{\pi_{(k)}(t)} \alpha_i = \pi_{(k)}(t)-H_{i,\pi}(t)\alpha_i,
\]
and
\[
\induced{\Psi}(\eta_{(k)})(t) = \Psi(\pi_{(k)}(t) - H_{i,\pi}(t) \alpha_i) = \Psi\bigl( \pi_{(k)}(t) \bigr) - H_{i,\pi}(t) \Psi(\alpha_i).
\]

We note that from the definition of the crystal operators and Lemma~\ref{lemma:simple_scaling}, it is sufficient to consider $s_i^v = \prod_{j \in \phi^{-1}(i)} \virtual{s}_j$ for the action of $e_i^v$. Thus we have
\begin{align*}
s_i^{v}\left( \Psi\bigl(\pi_{(k)}(t)\bigr) \right) & =\prod_{j \in \phi^{-1}(i)} \virtual{s}_j \left( \Psi\bigl( \pi_{(k)}(t) \bigr) \right)
\\ &= \prod_{j \in \phi^{-1}(i)} \virtual{s}_j\left( \sum_{l \in I} H_{l,\pi}(t) \sum_{p \in \phi^{-1}(l)}\gamma_p \virtual{\Lambda}_p \right)
\\ &= \Psi (\pi_{(k)}(t))-\sum_{j \in \phi^{-1}(i)} \inner{\virtual{\alpha}_j^{\vee}}{\sum_{l \in I} H_{l,\pi}(t) \sum_{p \in \phi^{-1}(l)}\gamma_p \virtual{\Lambda}_p} \virtual{\alpha}_j \\
\\ &= \Psi\bigl( \pi_{(k)}(t) \bigr) - H_{i,\pi}(t)\sum_{j \in \phi^{-1}(i)} \gamma_j \virtual{\alpha}_j.
\end{align*}
Hence $s_i^{v}\bigl( \induced{\Psi}(\pi_{(k)}) \bigr) = \induced{\Psi}(\eta_{(k)})$ if and only if $\Psi(\alpha_i) = \sum_{j \in \phi^{-1}(i)} \virtual{\alpha}_j$. Therefore $\induced{\Psi}$ is a virtualization map if and only if Equation~\eqref{eq:virtual_simple_roots} holds.
\end{proof}

\begin{remark}
Theorem~\ref{thm:virtualization} holds for any path $\eta$ such that $B(\eta)$ (and subsequently $B(\virtual{\eta})$) has the integrality property.
\end{remark}

\begin{ex}
Consider the diagram folding from type $A_{3}$ to type $C_2$, where $\phi^{-1}(1)=\{1,3\}$ and $\phi^{-1}(2)=\{2\}$ and $\gamma_i = i$. The map $\Psi$ is given by
\begin{align*}
\Lambda_1 & \mapsto \virtual{\Lambda}_1 + \virtual{\Lambda}_3,
\\ \Lambda_2 & \mapsto 2\virtual{\Lambda}_2.
\end{align*}
Since $\alpha_i = \sum_{i' \in I} A_{i',i} \Lambda_{i'}$, we have
\begin{align*}
\virtual{\alpha}_1 + \virtual{\alpha}_3 & = (2 \virtual{\Lambda}_1 - \virtual{\Lambda}_2) + (2\virtual{\Lambda}_3 - \virtual{\Lambda}_2) = 2 (\virtual{\Lambda}_1 + \virtual{\Lambda}_3) - 2 \virtual{\Lambda}_2
\\ & = 2 \Psi(\Lambda_1) - \Psi(\Lambda_2)  = \Psi(\alpha_1),
\\ 2\virtual{\alpha}_2 & = 2 (2\virtual{\Lambda}_2 - \virtual{\Lambda}_1 - \virtual{\Lambda}_3) = 2(2\virtual{\Lambda}_2) - 2(\virtual{\Lambda}_1 + \virtual{\Lambda}_3)
\\ & = 2\Psi(\Lambda_2) - 2\Psi(\Lambda_1) = \Psi(\alpha_2).
\end{align*}
Therefore, the map $\induced{\Psi}$ is a virtualization map by Theorem~\ref{thm:virtualization}. In particular, if we consider the highest weight element $\pi(t) = 3t \Lambda_1 + t \Lambda_2 \in B(3\Lambda_1 + \Lambda_2)$, then
\begin{align*}
(f_2(\pi))(t) & = 5t \Lambda_1 - t \Lambda_2,
\\ \induced{\Psi}(\pi)(t) & = 3 t \Lambda_1 + 2 t \Lambda_2 + 3 t \Lambda_3.
\\ \induced{\Psi}(f_2 \pi)(t) & = \bigl(\virtual{f}_2^2 \induced{\Psi}(\pi) \bigr)(t) = 5 t \Lambda_1 - 2 t \Lambda_2 + 5 t \Lambda_3.
\end{align*}
\end{ex}

\section{Applications}
\label{sec:applications}

In this section, we present an application of Theorem~\ref{thm:virtualization} to KR crystals.

\begin{prop}
\label{prop:projected_virtualization}
Let $\g$ be of affine type. Let $\induced{\Psi}$ be the virtualization map induced from the generalized diagram folding $\phi$ given in Section~\ref{sec:level0_kr_crystals}. Then there exists a $U_q'(\g)$-crystal virtualization map $\induced{\Psi}_{\cl}$ such that the diagram
\[
\xymatrix@C=4em@R=4em{B(\lambda) \ar[r]^{\induced{\Psi}} \ar[d]_{\cl} & B\bigl(\Psi(\lambda)\bigr) \ar[d]^{\cl} \\ B(\lambda)_{\cl} \ar[r]_{\induced{\Psi}_{\cl}} & B\bigl(\Psi(\lambda)\bigr)_{\cl}}
\]
commutes.
\end{prop}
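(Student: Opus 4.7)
The plan is to \emph{define} $\induced{\Psi}_{\cl}$ by the forced formula $\induced{\Psi}_{\cl}\bigl(\cl(\pi)\bigr) := \cl\bigl(\induced{\Psi}(\pi)\bigr)$, so commutativity of the square holds by construction. The real content is then (a) to show this formula descends to a well-defined map on the quotient $B(\lambda)_{\cl}$, and (b) to verify that the resulting map satisfies the virtualization axioms~\eqref{eq:virtual_crystal}.

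For (a), since $\induced{\Psi}(\pi)(t) = \sum_{i \in I} H_{i,\pi}(t)\Psi(\Lambda_i)$ is linear in $\pi$ through $\Psi$, well-definedness reduces to showing that $\Psi$ descends to a linear map $\mathfrak{h}^*_{\RR}/\RR\delta \to \virtual{\mathfrak{h}}^*_{\RR}/\RR\virtual{\delta}$, equivalently that $\Psi(\delta) \in \RR\virtual{\delta}$. Writing $\delta = \sum_{i \in I} a_i \alpha_i$ and applying property~(II) of Theorem~\ref{thm:virtualization}, this reduces to the identity $a_{\phi(j)}\, \gamma_j = c\, \virtual{a}_j$ for every $j \in \virtual{I}$ and some global constant $c$. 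I would verify this case-by-case across the affine foldings listed in~\eqref{eqn:affine_embeddings}, comparing~\cite[Table Aff.]{kac90} against the scaling factors specified in Section~\ref{sec:level0_kr_crystals}. I expect this to be the main (though routine) technical step, with $c$ turning out to be the order of $\phi$ in each case.

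For (b), the $U_q'(\g)$-crystal structure on $B(\lambda)_{\cl}$ constructed in~\cite{NS05} is defined precisely so that $\cl \colon B(\lambda) \to B(\lambda)_{\cl}$ intertwines the crystal operators, preserves $\varepsilon_i$ and $\varphi_i$, and satisfies $\wt_{\cl} = \cl \circ \wt$, and analogously for $B\bigl(\Psi(\lambda)\bigr)$. Thus each virtualization identity for $\induced{\Psi}$ supplied by Theorem~\ref{thm:virtualization}, namely $\induced{\Psi} \circ e_i = e_i^v \circ \induced{\Psi}$ together with the analogous relations for $f_i$, $\varepsilon_i$, $\varphi_i$, descends through the two vertical arrows to the corresponding relation for $\induced{\Psi}_{\cl}$. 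The weight identity $\wt = \Psi^{-1} \circ \virtual{\wt}$ likewise descends, using the map on $P_{\cl}$ induced by $\Psi$ in part~(a). Hence $\induced{\Psi}_{\cl}$ is a $U_q'(\g)$-crystal virtualization map.
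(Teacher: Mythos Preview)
Your approach is correct and matches the paper's: the entire proof there is the single line ``This can be seen from the fact that $\Psi(\delta) = a_0 \gamma_0 \virtual{\delta}$,'' which is precisely your reduction $\Psi(\delta) \in \RR\virtual{\delta}$ in part~(a), with part~(b) left implicit. One small correction: the constant $c$ is $a_0\gamma_0$, not the order of $\phi$; these differ for instance in the folding $D_{n+1}^{(2)} \hookrightarrow A_{2n-1}^{(1)}$, where all $\gamma_i = 1$ and all $a_i = 1$, so $\Psi(\delta) = \virtual{\delta}$ while $\phi$ has order~$2$. Your planned case-by-case verification would of course reveal this.
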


\begin{proof}
Note that $\Psi(\delta) = a_0 \overline{\gamma}_0 \virtual{\delta}$, and that $\cl$ is given by quotienting by $\delta$.
Define $\Psi_{\cl} \colon P_{\cl} \to \virtual{P}_{\cl}$ by
\[
\Lambda_i \mapsto \overline{\gamma}_i \sum_{j \in \phi^{-1}(i)} \virtual{\Lambda}_j,
\]
and it is straightforward to verify that
\[
\alpha_i \mapsto \overline{\gamma}_i \sum_{j \in \phi^{-1}(i)} \virtual{\alpha}_j.
\]
Recall that the computation of the crystal operators does not depend on the coefficient of $\delta$.
Hence, we define $\induced{\Psi}_{\cl} \colon B(\lambda)_{\cl} \to B\bigl(\Psi(\lambda)\bigr)_{\cl}$ as the crystal map induced from $\Psi_{\cl}$, and the claim follows.
\end{proof}

\begin{thm}
\label{thm:virtualization_single_column}
Let $\g$ be of affine type. Suppose $r \in I$ is such that $\overline{\gamma}_r = 1$ or $\g$ is of type $A_{2n}^{(2)}$, $A_{2n}^{(2)\dagger}$. Then Conjecture~\ref{conj:virtualization} holds for $s = 1$.
\end{thm}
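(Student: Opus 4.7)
The plan is to realize $B^{r,1}$ as projected level-zero LS paths and then apply Proposition~\ref{prop:projected_virtualization}. By Theorem~\ref{thm:projected_LS_paths} applied to $\lambda = \varpi_r$, there is a $U_q'(\g)$-crystal isomorphism $B^{r,1} \cong B(\varpi_r)_{\cl}$. Proposition~\ref{prop:projected_virtualization} (whose hypothesis~(I) of Theorem~\ref{thm:virtualization} holds for all natural foldings in~\eqref{eqn:affine_embeddings}) then furnishes a $U_q'(\g)$-virtualization map
\[
\induced{\Psi}_{\cl} \colon B(\varpi_r)_{\cl} \longrightarrow B\bigl(\Psi(\varpi_r)\bigr)_{\cl}.
\]
Hence it only remains to identify the codomain with the virtual tensor product predicted by Conjecture~\ref{conj:virtualization}; applying Theorem~\ref{thm:projected_LS_paths} on the $\virtual{\g}$ side, this reduces to writing $\Psi(\varpi_r)$ as an explicit non-negative integer combination of the virtual level-zero fundamental weights $\virtual{\varpi}_b$.

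To carry this out, I would use $\varpi_r = \Lambda_r - a_r^{\vee}\Lambda_0$ together with Equation~\eqref{eq:virtualization_map} to obtain
\[
\Psi(\varpi_r) = \sum_{j \in \phi^{-1}(r)} \gamma_j\, \virtual{\Lambda}_j \;-\; a_r^{\vee} \sum_{k \in \phi^{-1}(0)} \gamma_k\, \virtual{\Lambda}_k,
\]
and then expand $\virtual{\varpi}_b = \virtual{\Lambda}_b - \virtual{a}_b^{\vee}\virtual{\Lambda}_0$ on the target. The desired identity splits into two subcases. If $\gamma_r = 1$, one needs $\Psi(\varpi_r) = \sum_{b \in \phi^{-1}(r)} \virtual{\varpi}_b$, which, using that $\phi^{-1}(0) = \{0\}$ for the foldings in~\eqref{eqn:affine_embeddings}, reduces to the numerical identity $a_r^{\vee}\gamma_0 = \sum_{b \in \phi^{-1}(r)} \virtual{a}_b^{\vee}$. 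If instead $\g = A_{2n}^{(2)}$ or $A_{2n}^{(2)\dagger}$ and $r = n$ (the only case falling outside $\gamma_r = 1$ for these types), then $\phi^{-1}(n) = \{n\}$ and $\gamma_n = 2$, and one needs $\Psi(\varpi_n) = 2\virtual{\varpi}_n$, which is again a direct check using the tabulated Kac labels.

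Combining: once the weight identity is established, Theorem~\ref{thm:projected_LS_paths} on the virtual side gives
\[
B\bigl(\Psi(\varpi_r)\bigr)_{\cl} \;\cong\; \bigotimes_{b \in \phi^{-1}(r)} \virtual{B}^{b,1}
\]
when $\gamma_r = 1$, and $B(\Psi(\varpi_n))_{\cl} \cong \virtual{B}^{n,1} \otimes \virtual{B}^{n,1}$ in the special $A_{2n}^{(2)}$/$A_{2n}^{(2)\dagger}$ case; composing with $\induced{\Psi}_{\cl}$ and the isomorphism $B^{r,1} \cong B(\varpi_r)_{\cl}$ embeds $B^{r,1}$ as the image $V$ of a virtualization map into exactly the crystal predicted by Conjecture~\ref{conj:virtualization}. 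The main obstacle is the weight computation in the second paragraph: it is essentially a case-by-case bookkeeping exercise using the explicit dual Kac labels and scaling factors for each embedding in~\eqref{eqn:affine_embeddings}. The subtle case is $A_{2n}^{(2)}$/$A_{2n}^{(2)\dagger}$ at $r = n$, where the factor $\gamma_n = 2$ is precisely what matches the doubled tensor factor in the conjecture rather than the single tensor factor over $\phi^{-1}(n) = \{n\}$.
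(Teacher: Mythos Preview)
Your proposal is correct and follows essentially the same route as the paper: realize $B^{r,1}$ as $B(\varpi_r)_{\cl}$ via Theorem~\ref{thm:projected_LS_paths}, invoke Proposition~\ref{prop:projected_virtualization} to obtain $\induced{\Psi}_{\cl}$, and then identify the target $B\bigl(\Psi(\varpi_r)\bigr)_{\cl}$ with the tensor product predicted by Conjecture~\ref{conj:virtualization} by applying Theorem~\ref{thm:projected_LS_paths} on the $\virtual{\g}$ side. The paper's proof says exactly this in two sentences; you have simply made explicit the weight identity $\Psi(\varpi_r) = \sum_{b \in \phi^{-1}(r)} \virtual{\varpi}_b$ (respectively $\Psi(\varpi_n) = 2\virtual{\varpi}_n$) that the paper leaves to the reader, and correctly pinpointed why the hypothesis $\gamma_r = 1$ is needed---namely so that Theorem~\ref{thm:projected_LS_paths}, which only produces tensor products of $B^{b,1}$'s, lands in $\bigotimes_{b} B^{b,\gamma_r s}$ rather than in $\bigotimes_b (B^{b,1})^{\otimes \gamma_r}$.
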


\begin{proof}
From Theorem~\ref{thm:virtualization}, there exists a virtualization map $\induced{\Psi} \colon B(\varpi_r) \to B\bigl(\Psi(\varpi_r) \bigr)$ from the diagram folding $\phi$ given in Section~\ref{sec:level0_kr_crystals}.  From Proposition~\ref{prop:projected_virtualization} and Theorem~\ref{thm:projected_LS_paths}, the result follows.
\end{proof}

Note that we require $\overline{\gamma}_r = 1$ as
\[
B\bigl(\Psi(\varpi_r)\bigr)_{\cl} \iso \bigotimes_{r' \in \phi^{-1}(r)} (B^{r',1})^{\otimes \overline{\gamma}_r},
\]
which does not agree with Conjecture~\ref{conj:virtualization} when $\gamma_r > 1$.

\begin{ex}
Consider type $C_2^{(1)}$ and $B^{1,1}$ from Example~\ref{ex:type_C2}.
Recall that for the folding of type $A_3^{(1)}$ onto $C_2^{(1)}$, we have $\overline{\gamma}_0 = \overline{\gamma}_2 = 1$ and $\overline{\gamma}_1$.
Applying $\Upsilon$ to the image of $B(\Lambda_1 - \Lambda_0)$ in $B(\virtual{\Lambda}_1 + \virtual{\Lambda}_3 - 2\virtual{\Lambda}_0)$ results in the virtual crystal
\[
\begin{tikzpicture}[yscale=2]
\node (1) at (0,0) [draw,draw=none] {$\Lambda_1+\Lambda_3-2\Lambda_0$};
\node (2) at (0,-1) [draw,draw=none] {$2\Lambda_2-\Lambda_1-\Lambda_3$};
\node (3) at (0,-2) [draw,draw=none] {$\Lambda_1+\Lambda_3-2\Lambda_2$};
\node (4) at (0,-3) [draw,draw=none] {$2\Lambda_0-\Lambda_1-\Lambda_3$};
\draw[blue,->] (1) -- (2) node[midway,right,black] {$1$};
\draw[red,->] (2) -- (3) node[midway,right,black] {$2$};
\draw[blue,->] (3) -- (4) node[midway,right,black] {$1$};
\draw[black,->] (4) ..controls(2,-2.5) and (2,-0.5) .. (1) node[midway,right,black] {$0$};
\end{tikzpicture}
\]
\end{ex}

\appendix
\section{Examples with Sage}

We give an example of the virtualization map of $B(\Lambda_1)$ of type $C_2$ into $B(\virtual{\Lambda}_1 + \Lambda_3)$ of type $A_3$ using Sage~\cite{sage}.

\begin{lstlisting}
sage: PC = RootSystem(['C',2]).weight_space()
sage: LaC = PC.fundamental_weights()
sage: BC = crystals.LSPaths(LaC[1])
sage: PA = RootSystem(['A',3]).weight_space()
sage: LaA = PA.fundamental_weights()
sage: BA = crystals.LSPaths(LaA[1]+LaA[3])
sage: gens = BA.module_generators
sage: psi = BC.crystal_morphism(gens, codomain=BA)
sage: for x in BC:
....:     print "C2:", x
....:     print "A3:", psi(x)
....: 
C2: (Lambda[1],)
A3: (Lambda[1] + Lambda[3],)
C2: (-Lambda[1] + Lambda[2],)
A3: (-Lambda[1] + 2*Lambda[2] - Lambda[3],)
C2: (Lambda[1] - Lambda[2],)
A3: (Lambda[1] - 2*Lambda[2] + Lambda[3],)
C2: (-Lambda[1],)
A3: (-Lambda[1] - Lambda[3],)
\end{lstlisting}
Next we explicitly show the virtual crystal operators act as desired.
\begin{lstlisting}
sage: mg = BA.highest_weight_vector(); mg
(Lambda[1] + Lambda[3],)
sage: x1 = mg.f_string([1,3]); x1
(-Lambda[1] + 2*Lambda[2] - Lambda[3],)
sage: x2 = x1.f_string([2,2]); x2
(Lambda[1] - 2*Lambda[2] + Lambda[3],)
sage: x3 = x2.f_string([1,3]); x3
(-Lambda[1] - Lambda[3],)
\end{lstlisting}

\bibliographystyle{alpha}
\bibliography{biject}{}
\end{document}